\documentclass{amsart}
\usepackage{graphicx}
\usepackage[mathscr]{eucal}
\usepackage{amssymb}
\vfuzz2pt 
\hfuzz2pt 
\newtheorem{thm}{Theorem}[section]
\newtheorem{cor}[thm]{Corollary}
\newtheorem{lem}[thm]{Lemma}
\newtheorem{prop}[thm]{Proposition}
\theoremstyle{definition}

\theoremstyle{remark}

\newtheorem{ex}[thm]{Example}
\newcommand{\norm}[1]{\left\Vert#1\right\Vert}

\newcommand{\set}[1]{\left\{#1\right\}}

\newcommand{\Pm}{\mathrm{Prim}}
\newcommand{\mset}{\emptyset}

\newcommand{\Id}{\mathrm{Id}}
\newcommand{\MP}{\textrm{Min-Primal}}

\newcommand{\cd}{$\cdot$}
\newcommand{\ci}{\circle*{1}}
\newcommand{\G}{\textrm{Glimm}}
\newcommand{\Sc}{\mathcal{S}}
\newcommand{\KH}{\mathcal{K}(H)}
\begin{document}
\baselineskip=18pt
\title{Centers of $C^*$-algebras rich in modular ideals}
\author{Aldo J. Lazar}
\address{School of Mathematical Sciences\\
         Tel Aviv University\\
         Tel Aviv 69778, Israel}
\email{aldo@post.tau.ac.il}
\date{April 3, 2011}

\thanks{}%

\subjclass{46L05}

\keywords{$C^*$-algebra, primitive ideal, modular ideal, $C_0(X)$-algebra}

\begin{abstract}

We provide, in the spirit of \cite{De}, new conditions under which a $C^*$-algebra has a nonzero center. We also present an example of a
separable AF algebra with center $\{0\}$ but whose all the primitive ideals are modular, thus answering a question from \cite{De}.

\end{abstract}
\maketitle
\section{Introduction}

Clearly every primitive ideal of a $C^*$-algebra $A$ that does not contain the center of $A$ is modular. It is also obvious that the set of all
these ideals is open in $\Pm(A)$. Thus, if the center of $A$ is nonzero, the set of its modular primitive ideals has a nonempty interior in
$\Pm(A)$. The main purpose of \cite{De} is an investigation of the converse: does the existence of a nonempty open set of modular primitive
ideals imply a nonzero center? Among other results, an affirmative answer is obtained for liminal $C^*$-algebras. However, two examples of
postliminal $C^*$-algebras with zero center are given there: one separable which has a nonempty open set of modular primitive ideals and another
one that is nonseparable but whose all primitive ideals are modular.

Here we treat conditions which ensure that a $C^*$-algebra whose all its minimal primal ideals (the definition follows) are modular has a
nonzero center. In particular we treat the case of a postliminal algebra. In section \ref{second} we give an example of a postliminal AF algebra
with zero center whose all primitive ideals are modular. This answers a question of Delaroche, \cite[p. 126]{De}.

By the term ideal we shall mean everywhere a two sided closed ideal. $\Id(A)$ will denote the collection of all the ideals of the $C^*$-algebra
$A$. For $I\in \Id(A)$ we shall let $\theta_I : A\to A/I$ be the quotient map. On $Id(A)$ we shall consider a compact Hausdorff topology; a net
$\{I_{\alpha}\}$ converges to $I$ in this topology if and only if $\norm{\theta_{I_{\alpha}}(a)}\to \norm{\theta_I(a)}$ for every $a\in A$, see
\cite{A} for more on this topology. If it is not mentioned otherwise, $\Id(A)$ and its subsets will be endowed with this topology. However, on
the primitive ideal space of $A$, denoted $\Pm(A)$, we shall always work with the usual Jacobson topology. A primal ideal $I$ of a $C^*$-algebra
$A$ is defined by the following property: whenever $I_1, \ldots I_n$, $n\geq 2$, are ideals of $A$ such that $I_1\cdot I_2\cdots I_n = \{0\}$
then $I_k\subseteq I$ for some $k$. Every prime (in particular every primitive) ideal is primal and by using Zorn's lemma one sees that every
primal ideal must contain a minimal primal ideal. The collection of all the minimal primal ideals of $A$ is denoted by $\MP(A)$. See \cite{A}
and the references given there about primal ideals.

Two primitive ideals $P$, $Q$ of the $C^*$-algebra $A$ are said to be equivalent if $f(P) = f(Q)$ for every continuous $f : \Pm(A)\to
\mathbb{C}$. Each equivalence class is the hull of an ideal called a Glimm ideal of $A$; the collection of these ideals is denoted $\G(A)$ and
the quotient map $\phi_A : \Pm(A)\to \G(A)$ is called the complete regularization map, see \cite{AS}. $\G(A)$ will be considered with its
quotient topology induced by this map.

The following lemma is an immediate consequence of the Dauns-Hofmann theorem and the definition of the Glimm space so we omit its proof.

\begin{lem} \label{Glimm}

   Let $A$ be a $C^*$-algebra, $a\in A$ and $f : \emph{\G}(A)\to \mathbb{C}$ a bounded continuous function. Then there exists a unique $b\in A$
   such that $\theta_G(b) = f(G)\theta_G(a)$ for every $G\in \emph{\G}(A)$.

\end{lem}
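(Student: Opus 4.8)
The plan is to pull the function $f$ back to the primitive ideal space, apply the Dauns--Hofmann theorem there, and then descend the resulting element to the Glimm quotients. Since $\G(A)$ carries the quotient topology induced by $\phi_A$, the map $\phi_A : \Pm(A)\to \G(A)$ is continuous, so the composite $g := f\circ \phi_A : \Pm(A)\to \mathbb{C}$ is bounded and continuous. The Dauns--Hofmann theorem then furnishes a unique $b\in A$ with
\[
   \theta_P(b) = g(P)\,\theta_P(a) = f(\phi_A(P))\,\theta_P(a)\qquad (P\in \Pm(A)).
\]
It remains to show that this same $b$ satisfies the asserted identity at the level of Glimm ideals, and that it is the only element doing so.

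For existence I would fix a Glimm ideal $G\in \G(A)$. By the definition of the complete regularization, the fiber $\phi_A^{-1}(G)$ is precisely the equivalence class of primitive ideals collapsing to $G$, which is the hull $\{P\in \Pm(A) : P\supseteq G\}$; moreover $G = \bigcap\{P : P\supseteq G\}$. For every such $P$ one has $\phi_A(P)=G$, hence $f(\phi_A(P))=f(G)$, and therefore
\[
   \theta_P\bigl(b - f(G)a\bigr) = \theta_P(b) - f(G)\,\theta_P(a) = 0 .
\]
As this holds for every primitive ideal $P\supseteq G$, the element $b - f(G)a$ lies in $\bigcap_{P\supseteq G} P = G$, which is exactly the assertion $\theta_G(b) = f(G)\,\theta_G(a)$.

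For uniqueness, suppose some $b'\in A$ also satisfies $\theta_G(b') = f(G)\,\theta_G(a)$ for every $G\in \G(A)$. Then $b-b'\in \bigcap_{G\in \G(A)} G$. Since each Glimm ideal is the intersection of the primitive ideals in its fiber and every primitive ideal contains a Glimm ideal, this intersection equals $\bigcap_{P\in \Pm(A)} P = \{0\}$; hence $b=b'$.

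I expect the argument to be essentially bookkeeping once the Dauns--Hofmann theorem is in hand, so no step presents a genuine obstacle. The only point that requires care is the passage between the two families of quotient maps $\theta_P$ and $\theta_G$, governed by the identity $G=\bigcap\{P : \phi_A(P)=G\}$ that is built into the definition of the Glimm ideal; this is precisely what converts the pointwise equality over $\Pm(A)$ into membership in $G$, and what makes the intersection of all Glimm ideals collapse to $\{0\}$.
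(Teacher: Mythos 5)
Your proof is correct and is precisely the argument the paper has in mind: the paper omits the proof, describing the lemma as an immediate consequence of the Dauns--Hofmann theorem and the definition of the Glimm space, and your write-up (pull $f$ back along the continuous quotient map $\phi_A$, apply Dauns--Hofmann on $\Pm(A)$, then use $G=\bigcap\{P : P\supseteq G\}$ to descend to the Glimm quotients and to get uniqueness from $\bigcap_{P\in\Pm(A)}P=\{0\}$) is exactly that omitted argument, filled in correctly.
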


An ideal $I$ is called semi-Glimm if it contains a Glimm ideal; this Glimm ideal is necessarily unique since its hull must contain the hull of
$I$. Obviously every Glimm ideal is semi-Glimm and every proper primal ideal is semi-Glimm by \cite[Lemma 2.2]{AS}. We set $S-\G(A)$ for the
family of all the semi-Glimm ideals of $A$ and we let $\psi_A$ be the map that takes each $I\in S-\G(A)$ to the Glimm ideal it contains.

\begin{lem} \label{cont}

The map $\psi_A : \emph{S}-\emph{\G}(A)\to \emph{\G}(A)$ is continuous.

\end{lem}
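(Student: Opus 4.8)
The plan is to prove continuity of $\psi_A$ by working directly with the topology on $\mathrm{Id}(A)$ described in the introduction. Recall that a net $\{I_\alpha\}$ converges to $I$ in $\mathrm{Id}(A)$ precisely when $\norm{\theta_{I_\alpha}(a)} \to \norm{\theta_I(a)}$ for every $a \in A$, and that both $S\text{-}\G(A)$ and $\G(A)$ carry the relative topology from $\mathrm{Id}(A)$ (the quotient topology on $\G(A)$ coincides with this one, which is a standard fact about the Glimm space, and I would invoke it or verify it as needed). So I would take a net $\{I_\alpha\}$ in $S\text{-}\G(A)$ converging to some $I \in S\text{-}\G(A)$, set $G_\alpha = \psi_A(I_\alpha)$ and $G = \psi_A(I)$, and aim to show $\norm{\theta_{G_\alpha}(a)} \to \norm{\theta_G(a)}$ for every $a \in A$.

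Next I would exploit the defining inclusion $G_\alpha \subseteq I_\alpha$, which gives $\norm{\theta_{I_\alpha}(a)} \le \norm{\theta_{G_\alpha}(a)}$ for all $a$, and likewise $\norm{\theta_I(a)} \le \norm{\theta_G(a)}$. This already yields one half of the desired estimate in the limit: since $\norm{\theta_{I_\alpha}(a)} \to \norm{\theta_I(a)} \le \norm{\theta_G(a)}$, any subnet along which $\norm{\theta_{G_\alpha}(a)}$ converges has limit at least $\norm{\theta_{I_\alpha}(a)}$'s limit, so the genuine difficulty is the reverse inequality, namely controlling $\norm{\theta_{G_\alpha}(a)}$ from above by something converging to $\norm{\theta_G(a)}$. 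The key tool here should be Lemma \ref{Glimm}: because $G_\alpha$ and $G$ are Glimm ideals, the quotient norms $\norm{\theta_{G_\alpha}(a)}$ and $\norm{\theta_G(a)}$ are governed by the behavior of $a$ on the fibers of the complete regularization map, and Lemma \ref{Glimm} lets me multiply $a$ by continuous functions on $\G(A)$ to localize.

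The main obstacle, as I see it, is the upper semicontinuity: showing that $\limsup_\alpha \norm{\theta_{G_\alpha}(a)} \le \norm{\theta_G(a)}$. The strategy I would pursue is a localization argument. Given $\eps > 0$, the function $G' \mapsto \norm{\theta_{G'}(a)}$ on $\G(A)$ is upper semicontinuous in the relevant topology; using a continuous function $f$ on $\G(A)$ that is $1$ near $G$ and multiplying $a$ by the element $b$ produced by Lemma \ref{Glimm} (so that $\theta_{G'}(b) = f(G')\theta_{G'}(a)$), I can compare $\norm{\theta_{G_\alpha}(a)}$ with $\norm{\theta_{G_\alpha}(b)}$. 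Since $I_\alpha \to I$ forces $G_\alpha \to G$ in $\G(A)$ by the semicontinuity already established together with Lemma \ref{cont}'s hypotheses — more precisely, I would first argue $G_\alpha \to \psi_A(I)$ by showing the net of Glimm ideals accumulates only at $G$ — the values $f(G_\alpha)$ stay close to $1$ eventually, and the estimate $\norm{\theta_{G_\alpha}(b)} \le \norm{\theta_G(b)} + \eps$ (upper semicontinuity at the single point $G$) transfers back to $a$. The delicate point is justifying that $G_\alpha \to G$ in $\G(A)$ at all, which is really the heart of the continuity statement, and establishing the upper-semicontinuity estimate uniformly enough that the approximation by $b$ closes the gap; I expect that step to require care with the quotient topology on $\G(A)$ versus the $\mathrm{Id}(A)$ topology and an appeal to the compactness of $\mathrm{Id}(A)$ to reduce to convergent subnets.
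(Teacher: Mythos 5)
Your proposal founders on its very first step: the identification of the topology on the target space. The lemma asserts continuity into $\G(A)$ equipped with the \emph{quotient} topology $\tau_q$ induced by $\phi_A$ from the Jacobson topology on $\Pm(A)$, not with the topology inherited from $\Id(A)$. The coincidence of these two topologies, which you invoke as a ``standard fact,'' is false in general: Proposition \ref{open} of the paper proves it only under the extra hypothesis that $\phi_A$ is open, and the first example of Section \ref{second} is constructed precisely to show this can fail. That same example refutes the statement your plan reduces the lemma to, namely that $I_\alpha\to I$ in $S$-$\G(A)$ forces $\norm{\theta_{G_\alpha}(a)}\to\norm{\theta_G(a)}$ for all $a$. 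Take $I_\alpha=R(t_\alpha,n_0)$ with $t_\alpha\to 0^+$ and $I=R(0,n_0)$ (primitive, hence primal, hence semi-Glimm); then $I_\alpha\to I$ in $\Id(A)$, $G_\alpha=\psi_A(I_\alpha)=I_\alpha$, and $G=\psi_A(I)=\cap_{1\leq n\leq\infty}R(0,n)$, but $\norm{\theta_{G_\alpha}(f)}=\abs{f_{n_0}(t_\alpha)}\to\abs{f_{n_0}(0)}$, which for suitable $f$ (e.g.\ a constant rank-one diagonal projection missing the $n_0$-th coordinate) is strictly smaller than $\norm{\theta_G(f)}=\sup_n\abs{f_n(0)}$. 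So the norm-convergence statement you aim at is strictly stronger than the lemma and is simply false, even though $\psi_A$ is indeed continuous into $(\G(A),\tau_q)$.

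Even setting that aside, the argument is circular and incomplete on its own terms. To conclude that $f(G_\alpha)$ is eventually close to $1$ you need $G_\alpha\to G$ in a topology making $f$ continuous, which is exactly the continuity of $\psi_A$ being proved --- and you concede this is ``really the heart of the continuity statement.'' Moreover your ``easy half'' only yields $\liminf_\alpha\norm{\theta_{G_\alpha}(a)}\geq\norm{\theta_I(a)}$, which is weaker than the required bound $\norm{\theta_G(a)}$, since $G\subseteq I$ gives $\norm{\theta_I(a)}\leq\norm{\theta_G(a)}$ with possibly strict inequality. The paper's proof avoids norms and nets entirely by using what a quotient topology is for: given a $\tau_q$-open $\mathcal{U}\subseteq\G(A)$, write $\phi_A^{-1}(\mathcal{U})=\Pm(J)$ for an ideal $J$ of $A$; for a semi-Glimm ideal $I$, whose hull lies in a single Glimm class, one checks that $\psi_A(I)\in\mathcal{U}$ if and only if $J\nsubseteq I$; and $\{I\in\Id(A)\mid J\nsubseteq I\}$ is open in $\Id(A)$ by \cite[p. 525]{A}. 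That sub-basic description of the $\Id(A)$ topology, not the net criterion, is the tool the proof needs.
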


\begin{proof}

Let $\mathcal{U}$ be an open subset of $\G(A)$, $\mathcal{V} := \phi_A^{-1}(\mathcal{U})$ and denote by $J$ the ideal of $A$ for which $\Pm(J) =
\mathcal{V}$. Then for $I\in S-\G(A)$ we have $\psi_A(I)\in \mathcal{U}$ if and only if $\phi_A^{-1}(\psi_A(I))\in \mathcal{V}$ and this happens
if and only if the hull of $I$ is contained in $\mathcal{V}$. But for a semi-Glimm ideal $I$ this is equivalent to $J\nsubseteq I$. Now the set
$\{I\in \Id(A) \mid J\nsubseteq I\}$ is open in $\Id(A)$, see \cite[p. 525]{A}, and we are done.

\end{proof}

Observe that if $a,b,f$ are as in Lemma \ref{Glimm} and $I\in S-\G(A)$ then $\theta_I(b) = f(J)\theta_I(a)$, where $J := \psi_A(I)$, as follows
by using the canonical isomorphism of $A/I$ with $(A/J)/(I/J)$.

A family $\mathcal{F}$ of ideals of the $C^*$-algebra $A$ is called sufficiently large if \\ $\cup \{\Pm(A/I) \mid I\in \mathcal{F}\}$ is dense
in $\Pm(A)$.

For AF algebras we use the terminology of \cite{Da} and some which is self-explanatory but formalized in \cite{LT} like, for instance, the
notions of a level and a connected sequence in a Bratteli diagram. Recall that a subdiagram $E$ of a diagram $D$ of an AF algebra $A$ is the
diagram of an ideal $I$ of $A$ if and only if $E$ has the following two properties: the descendants of every vertex of $E$ belong to $E$ and if
every descendant of a vertex belongs to $E$ then that vertex itself belongs to $E$. If this is the case then $D\setminus E$ is a diagram of
$A/I$. The ideal $I$ is primitive if and only if every two vertices in $D\setminus E$ have a common descendant in $D\setminus E$, see
\cite[Theorem 3.8]{B}.

\section{Non trivial centers} \label{first}

Observe that if $a,b,f$ are as above and $I\in \mathrm{S-Glimm}(A)$ then $\theta_I(b) = f(J)\theta_I(a)$, where $J := \psi_A(I)$, as follows
from the canonical isomorphism of $A/I$ with $(A/J)/(I/J)$.

The proof of the following theorem is a variant of the proof of \cite[Theorem 3.7]{AEN}. Recall that $\mathrm{Glimm}(A)$ is considered with its
quotient topology which in the presence of a countable approximate identity is completely regular by \cite[Theorem 2.6]{L}.

\begin{thm} \label{main}

Let $A$ be a $C^*$-algebra that has a countable approximate identity and suppose there exists a sufficiently large Baire subspace $\mathcal{S}$
of $\emph{S-Glimm}(A)$ consisting of modular ideals. Suppose, moreover, that every non-void (relatively) open subset of $\mathcal{S}$ contains
the preimage by $\psi_A^{\mathcal{S}} := \psi_A|\Sc$ of a non-void relatively open subset of $\psi_A^{\Sc}(\Sc)\subseteq \emph{\G}(A)$. Then $A$
has a non-zero center.

\end{thm}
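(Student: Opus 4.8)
The plan is to manufacture a single nonzero central element of $A$ by combining a Baire category argument (to produce an element of $A$ that acts as a unit on a whole open set of ideals) with the Dauns--Hofmann cut-off furnished by Lemma \ref{Glimm}. First I would fix a countable approximate identity $(e_n)$ with $0\le e_n\le \one$. Since each $I\in\Sc$ is modular, $A/I$ is unital and $\theta_I(e_n)\to\one_{A/I}$ in norm. Fixing $\eta=\tfrac12$, set
\[
F_n:=\set{I\in\Sc : \norm{\theta_I(e_ne_m-e_m)}\le\eta \text{ for all } m}.
\]
Each $F_n$ is closed in $\Sc$, because $I\mapsto\norm{\theta_I(y)}$ is continuous on $\Id(A)$, and the pointwise convergence $\theta_I(e_n)\to\one_{A/I}$ gives $\Sc=\bigcup_n F_n$. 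As $\Sc$ is a Baire space, some $F_{n_0}$ has nonempty interior $U$. Letting $m\to\infty$ in the defining inequality yields $\norm{\one_{A/I}-\theta_I(e_{n_0})}\le\eta<1$ for every $I\in U$, so the spectrum of the positive contraction $\theta_I(e_{n_0})$ lies in $[1-\eta,1]$. Choosing a continuous $g:[0,\infty)\to[0,1]$ with $g(0)=0$ and $g\equiv1$ on $[1-\eta,1]$ and putting $u:=g(e_{n_0})\in A$, I get $\theta_I(u)=g(\theta_I(e_{n_0}))=\one_{A/I}$ for all $I\in U$.

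Next I would invoke the richness hypothesis: $U$ contains $(\psi_A^{\Sc})^{-1}(W)$ for some nonempty $W$ relatively open in $Y:=\psi_A^{\Sc}(\Sc)\subseteq\G(A)$. Write $W=W'\cap Y$ with $W'$ open in $\G(A)$ and fix $G_0\in W$. Since $A$ has a countable approximate identity, $\G(A)$ is completely regular by \cite[Theorem 2.6]{L}, so there is a continuous $f:\G(A)\to[0,1]$ with $f(G_0)=1$ and $f\equiv0$ off $W'$. Applying Lemma \ref{Glimm} to $u$ and $f$ produces $z\in A$ with $\theta_G(z)=f(G)\theta_G(u)$, equivalently $\theta_I(z)=f(\psi_A(I))\theta_I(u)$ for every $I\in S\text{-}\G(A)$.

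It is immediate that $z\neq0$: picking $I_0\in\Sc$ with $\psi_A(I_0)=G_0$ gives $I_0\in(\psi_A^{\Sc})^{-1}(W)\subseteq U$, so $\theta_{I_0}(z)=f(G_0)\one_{A/I_0}=\one_{A/I_0}\neq0$. For centrality I would test the commutators $zx-xz$ on the dense set $D:=\bigcup_{I\in\Sc}\Pm(A/I)$ of $\Pm(A)$ (dense since $\Sc$ is sufficiently large): because $P\mapsto\norm{\theta_P(zx-xz)}$ is lower semicontinuous on $\Pm(A)$, its vanishing on $D$ forces $zx-xz=0$. Fix $P\in D$, say $P\supseteq I$ with $I\in\Sc$. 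As $P$ is primal it is semi-Glimm, and the inclusions $\psi_A(I)\subseteq I\subseteq P$ exhibit $\psi_A(I)$ as the unique Glimm ideal contained in $P$, so $\psi_A(I)=\phi_A(P)$. Now $\theta_P(zx-xz)=f(\phi_A(P))\,\theta_P(ux-xu)$; and if $f(\phi_A(P))\neq0$ then $\phi_A(P)=\psi_A(I)\in W'\cap Y=W$, whence $I\in(\psi_A^{\Sc})^{-1}(W)\subseteq U$, giving $\theta_P(u)=\one_{A/P}$ and $\theta_P(ux-xu)=0$. In either case $\theta_P(zx-xz)=0$, so $z$ lies in the center.

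The Baire step producing $u$ is the heart of the construction, but the step I expect to be the real obstacle is verifying that $z$ is genuinely central. The naive element $f\cdot u$ need not commute with $A$, and the worry is that there are primitive ideals over $G_0$ at which $u$ fails to act as a scalar. The resolution rests on two points that I would want to isolate carefully: that centrality may be checked on the dense set $D$ via lower semicontinuity of the norm on $\Pm(A)$, and that the uniqueness of the Glimm ideal inside a semi-Glimm ideal forces $\psi_A(I)=\phi_A(P)$ whenever $P\supseteq I$, so that the support condition on $f$ automatically throws us back into $U$, where $u$ acts as the identity.
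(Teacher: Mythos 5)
Your proposal is correct and follows essentially the same route as the paper's proof: a Baire category argument on $\Sc$ producing an element that acts as the unit on a nonempty open subset, then a bump function on $\G(A)$ fed into Lemma \ref{Glimm}, with non-vanishing checked at an ideal over the chosen Glimm point and centrality deduced from sufficient largeness. The only notable difference is technical: where the paper proves lower semicontinuity of $I\mapsto\norm{\lambda\one_I+\theta_I(a)}$ (as a supremum of continuous functions) to get closed sets for the Baire argument, you use the sets $\set{I\in\Sc : \norm{\theta_I(e_ne_m-e_m)}\le 1/2 \text{ for all } m}$, whose closedness is immediate from continuity of $I\mapsto\norm{\theta_I(y)}$ on $\Id(A)$ --- a clean equivalent device.
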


\begin{proof}

   With $I\in \Sc$ we shall denote by $\mathbf{1}_I$ the unit of $A/I$. If $A$ has a unit there is nothing to prove; otherwise let $\mathbf{1}$
   be the unit of $M(A)$, the multiplier algebra of $A$.
   Then $\tilde{A} := \{\lambda \mathbf{1} + a \mid \lambda\in \mathbb{C}, \; a\in A\}\subseteq M(A)$ is the $C^*$-algebra obtained by adjoining
   a unit to $A$. Letting $\tilde{\theta}_I : \tilde{A}\to A/I$ to be the obvious extension of $\theta_I, \; I\in \Sc,$ we have
   \begin{multline*}
      \norm{\lambda \mathbf{1}_I + \theta_I(a)} = \Vert\tilde{\theta}_I(\lambda \mathbf{1} + a)\Vert =
      \sup\{\Vert\tilde{\theta}_I(\lambda \mathbf{1} + a)\theta_I(b)\Vert \mid b\in A, \; \norm{b}\leq 1\} = \\
      \sup\{\norm{\theta_I(\lambda b + ab)} \mid b\in A, \; \norm{b}\leq 1\}.
   \end{multline*}
  We infer that the function $I\to \norm{\lambda \mathbf{1}_I + \theta_I(a)}$ is lower semi-continuous on $\Sc$. Thus
  $\{I\in \Sc \mid \norm{\lambda \mathbf{1}_I + \theta_I(a)}\leq \alpha\}$ is a closed subset of $\Sc$ for every $\alpha > 0$.

Let now $\{u_n\}$ be a positive countable approximate unit of $A$. Clearly $\lim_{n\to \infty} \theta_I(u_n) = ~\mathbf{1}_I$ for every $I\in
\Sc$ hence
\[
 \cup_{n=1}^{\infty} \{I\in \Sc \mid \norm{\mathbf{1}_I - \theta_I(u_n)}\leq 1/2\} = \Sc.
\]
Since $\Sc$ is a Baire space, there must be a natural number $n_0$ such that the closed set $\{I\in \Sc \mid \norm{\mathbf{1}_I -
\theta_I(u_{n_0})}\leq 1/2\}$ has a non-void interior $\mathcal{E}$.

If $I\in \mathcal{E}$ the spectrum of $\theta_I(u_{n_0})$ is included in the interval $[1/2,1]$. Choosing a continuous function $f : [0,1]\to
[0,1]$ such that $f(0) = 0$ and $f(t) =1$ for $t\in [1/2,1]$ we get an element $a := f(u_{n_0})\in A$ such that $\theta_I(a) = \mathbf{1}_I$ if
$I\in \mathcal{E}$. Let $\mathcal{U}$ be an open subset of $\G(A)$ such that $\mathcal{U}\cap \psi_A^{\Sc}(\Sc)\neq \mset$ and
${\psi_A^{\Sc}}^{-1}(\mathcal{U})\subseteq \mathcal{E}$, $J_0\in \mathcal{U}\cap \psi_A^{\Sc}(\Sc)$, and $g : \G(A)\to [0,1]$ a continuous
function that satisfies $g(J_0) = 1$ and $g(J) = 0$ for $J\notin \mathcal{U}$. With $b\in A$ given by Lemma \ref{Glimm} for $a$ and $g$, i.e.
$\theta_J(b) = g(J)\theta_J(a)$ for every $J\in \G(A)$, we have $\theta_I(b) = 0$ if $I\in \Sc\setminus {\psi_A^{\Sc}}^{-1}(\mathcal{U})$.
Indeed, if $I\in \Sc\setminus {\psi_A^{\Sc}}^{-1}(\mathcal{U})$ then $g(\psi_A^{\Sc}(I)) = 0$. On the other hand, if $I\in
{\psi_A^{\Sc}}^{-1}(\mathcal{U})$ and $J := \psi_A^{Sc}(I)$ we have $\theta_I(b) = g(J)\mathbf{1}_I$. Consequently $\theta_I(bc - cb) = 0$ for
every $I\in \Sc$ and every $c\in A$.$\Sc$ is sufficiently large hence $b$ is in the center of $A$. Now for $I_0\in {\psi_A^{Sc}}^{-1}(J_0)$ one
has $\theta_{I_0}(b) = \theta_{I_0}(a) = \mathbf{1}_{I_0}\neq 0$ and we conclude that the center of $A$ is non-zero.

The following result can be also obtained as an easy consequence of \cite[Lemma 3.6]{AS1}.

\end{proof}

\begin{prop} \label{open}

   Let $A$ be a $C^*$-algebra with a countable approximate identity. Suppose that $\phi_A : \emph{Prim}(A)\to \emph{\G}(A)$ is open and each
   Glimm ideal is modular. Then $A$ has a non-zero center.

\end{prop}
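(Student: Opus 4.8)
The plan is to deduce Proposition~\ref{open} from Theorem~\ref{main} by verifying its two hypotheses for the choice $\Sc := \G(A)$, regarded as a subspace of $\emph{S-Glimm}(A)$. First I would observe that every Glimm ideal is semi-Glimm (it contains itself), so $\G(A) \subseteq \emph{S-Glimm}(A)$ makes sense, and on this subspace the map $\psi_A^{\Sc}$ is simply the identity: each Glimm ideal is its own contained Glimm ideal. The assumption that each Glimm ideal is modular gives precisely the requirement that $\Sc$ consists of modular ideals. The countable approximate identity is carried over verbatim. Thus the substantive work is to verify (i) that $\Sc = \G(A)$ is sufficiently large and a Baire space, and (ii) that every nonvoid open subset of $\Sc$ contains the $\psi_A^{\Sc}$-preimage of a nonvoid open subset of $\psi_A^{\Sc}(\Sc)$.

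For the Baire property, I would invoke the remark preceding Theorem~\ref{main}: in the presence of a countable approximate identity, $\G(A)$ with its quotient topology is completely regular by \cite[Theorem 2.6]{L}. I expect that a locally compact (hence Baire) structure on $\G(A)$ follows from the openness of $\phi_A$, since an open continuous surjection transports good covering and local-compactness properties from $\Pm(A)$ to the quotient; the key point is that an open image of a locally compact space is locally compact, and locally compact Hausdorff spaces are Baire. To see that $\Sc$ is sufficiently large, I must check that $\cup\{\Pm(A/G) \mid G \in \G(A)\}$ is dense in $\Pm(A)$; in fact every primitive ideal $P$ contains the unique Glimm ideal $\phi_A(P)$ in its hull, so $P \in \Pm(A/\phi_A(P))$, giving equality rather than mere density.

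For the second hypothesis, the crucial simplification is that $\psi_A^{\Sc}$ is the identity on $\Sc = \G(A)$, so $\psi_A^{\Sc}(\Sc) = \G(A)$ and the preimage of an open set is that same open set. Hence the condition reduces to the trivial statement that every nonvoid open subset of $\Sc$ contains a nonvoid open subset of $\G(A)$ — namely itself. Here I would use the continuity of $\psi_A$ from Lemma~\ref{cont} to confirm that the subspace topology on $\Sc$ and the topology transported through $\psi_A^{\Sc}$ coincide, so that ``open in $\Sc$'' and ``open in $\G(A)$'' agree on this copy. This makes the second hypothesis of Theorem~\ref{main} automatic.

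The main obstacle I anticipate is the topological bookkeeping in showing $\G(A)$ is a Baire space: the quotient topology on $\G(A)$ can be delicate, and the openness of $\phi_A$ is exactly what is needed to promote the completely regular structure to local compactness and thence to the Baire property. Once that is secured, the remaining hypotheses of Theorem~\ref{main} hold trivially for $\Sc = \G(A)$, and an application of that theorem yields a nonzero center, completing the proof. I would also note the alternative route flagged in the excerpt via \cite[Lemma 3.6]{AS1}, which presumably packages the openness of $\phi_A$ together with modularity of Glimm ideals directly, but the deduction from Theorem~\ref{main} is the most economical given the tools already developed here.
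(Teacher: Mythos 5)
Your overall strategy is the same as the paper's (apply Theorem~\ref{main} with $\Sc = \G(A)$ and $\psi_A^{\Sc}$ the identity), but there is a genuine gap at the crucial step, and it is precisely the step that constitutes the entire content of the paper's proof. Two topologies live on $\G(A)$: the relative topology $\tau$ inherited from $\Id(A)$ (this is the one relevant to ``Baire subspace of $\mathrm{S}$-$\mathrm{Glimm}(A)$'' and to ``relatively open subset of $\Sc$'' in Theorem~\ref{main}), and the quotient topology $\tau_q$ (the one relevant to ``open subset of $\psi_A^{\Sc}(\Sc)\subseteq \G(A)$''). You claim that the coincidence of these topologies follows from the continuity of $\psi_A$ (Lemma~\ref{cont}), making the second hypothesis ``automatic.'' But Lemma~\ref{cont} gives continuity in only one direction: it shows every $\tau_q$-open set is $\tau$-open, i.e.\ $\tau$ is \emph{finer} than $\tau_q$. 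That inclusion is useless for the hypothesis you need to verify, which asks that every nonvoid $\tau$-open set contain a nonvoid $\tau_q$-open set; if $\tau$ were strictly finer (say, close to discrete) this could simply fail. The same gap infects your Baire verification: you establish that $(\G(A),\tau_q)$ is locally compact Hausdorff hence Baire, but Theorem~\ref{main} needs $(\G(A),\tau)$ to be Baire, which again requires identifying the two topologies.

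What is actually needed is the reverse inclusion $\tau \subseteq \tau_q$, and this is where the openness of $\phi_A$ does its real work — not merely in producing local compactness. Since $\tau$ is by definition the weak topology generated by the functions $G \mapsto \norm{\theta_G(a)}$, $a \in A$, one must show these functions are $\tau_q$-continuous. The paper does this by writing $\{G\in \G(A) \mid \norm{\theta_G(a)} > \alpha\} = \phi_A(\{P\in \Pm(A) \mid \norm{\theta_P(a)} > \alpha\})$, which is $\tau_q$-open exactly because $\phi_A$ is open (lower semicontinuity), and $\{G\in \G(A) \mid \norm{\theta_G(a)} \geq \alpha\} = \phi_A(\{P\in \Pm(A) \mid \norm{\theta_P(a)} \geq \alpha\})$, which is the continuous image of a quasi-compact set and hence closed in the Hausdorff space $(\G(A),\tau_q)$ (upper semicontinuity; the Hausdorff property rests on \cite[Theorem 2.6]{L}, which is where the countable approximate identity enters). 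Only after this does the identity map become a homeomorphism between $(\G(A),\tau)$ and $(\G(A),\tau_q)$, so that both the Baire hypothesis and the open-set hypothesis of Theorem~\ref{main} transfer. Your proposal omits this semicontinuity argument entirely, so as written it does not prove the proposition.
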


\begin{proof}

   For every $a\in A$ and $\alpha > 0$ we have $\{G\in \G(A) \mid \norm{\theta_G(a)}\geq \alpha\} = \phi_A(\{P\in \Pm(A) \mid
   \norm{\theta_P(a)}\geq \alpha\})$ and $\{G\in \G(A) \mid \norm{\theta_G(a)} > \alpha\} = \phi_A(\{P\in \Pm(A) \mid \norm{\theta_P(a)} >
   \alpha\})$. The set $\{G\in \G(A) \mid \norm{\theta_G(a)}\geq \alpha\}$ is closed in the Hausdorff space $(\G(A),\tau_q)$ as the continuous
   image of a compact set and $\{G\in \G(A) \mid \norm{\theta_G(a)} > \alpha\}$ is open by our assumption on $\phi_A$. Thus $G\to \norm{\theta_G(a)}$
   is continuous on $(\G(A),\tau_q)$ and we conclude that the identity map from $(\G(A),\tau)$ to $(\G(A),\tau_q)$, which is the restriction of
   $\psi_A$ to $(\G(A),\tau)$, is a homeomorphism. From the fact that $\phi_A$ is open we also infer that $(\G(A),\tau_q)$ is a locally compact
   Hausdorff space hence a Baire space. We get the conclusion from Theorem \ref{main}.

\end{proof}

A topological space $X$ is called quasi-completely regular if for every non-void open subset $U$ of $X$ there is a non-zero real valued
continuous function on $X$ that is identically $0$ on $X\setminus U$. Such spaces were called "quasi-uniformisable" in \cite{De} but this term
is used in another sense in topology.

\begin{prop} \label{qcr}

    Let $A$ be a $C^*$-algebra that has a countable approximate identity. Suppose that every minimal primal ideal of $A$ is modular and $\Pm(A)$
    is quasi-completely regular. Then $A$ has a non-zero center.

\end{prop}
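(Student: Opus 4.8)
The plan is to deduce Proposition \ref{qcr} from Theorem \ref{main} by exhibiting a suitable sufficiently large Baire subspace $\Sc$ of $\mathrm{S-Glimm}(A)$ consisting of modular ideals. The natural candidate is $\Sc := \MP(A)$, the space of minimal primal ideals. Indeed, every minimal primal ideal is proper and primal, hence semi-Glimm by \cite[Lemma 2.2]{AS}, and by hypothesis every such ideal is modular; moreover $\MP(A)$ is sufficiently large because every primitive ideal contains a minimal primal ideal, so $\cup\{\Pm(A/I) \mid I\in \MP(A)\}$ meets every basic open set of $\Pm(A)$ and is therefore dense. Thus the two structural requirements on $\Sc$ in Theorem \ref{main} (sufficiently large, consisting of modular ideals) are satisfied, and it remains to verify the Baire property and the openness condition relating $\Sc$ to $\psi_A(\Sc)\subseteq \G(A)$.

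For the openness hypothesis I would use quasi-complete regularity of $\Pm(A)$ to transfer it through the maps. First I would record that $\psi_A$ restricted to $\MP(A)$ is continuous by Lemma \ref{cont}, and that its image consists of the Glimm ideals below minimal primal ideals. The key is to show that every non-void relatively open $\mathcal{W}\subseteq \MP(A)$ contains the $\psi_A^{\Sc}$-preimage of a non-void relatively open subset of $\psi_A^{\Sc}(\Sc)$. I would pull $\mathcal{W}$ back to an open saturated subset $U$ of $\Pm(A)$ (via the natural relation between primal ideals and their hulls), apply quasi-complete regularity to produce a non-zero continuous $h : \Pm(A)\to \Real$ vanishing off $U$, and then exploit that such an $h$ is constant on Glimm classes (by the very definition of the complete regularization $\phi_A$) to descend $h$ to a non-zero continuous function on $\G(A)$ whose support carves out the required open set in $\psi_A^{\Sc}(\Sc)$.

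The Baire property of $\MP(A)$ is the step I expect to be the main obstacle, since in general $\MP(A)$ need not be locally compact or metrizable, and its topology (inherited from $\Id(A)$) can be delicate. I would attempt to establish it either by showing $\MP(A)$ is a dense $G_\delta$, or a completely regular subspace of the compact Hausdorff space $\Id(A)$ with the right countability properties coming from the countable approximate identity (which makes $\G(A)$ well-behaved, cf.\ \cite[Theorem 2.6]{L}), and then invoke a standard Baire category theorem. An alternative, possibly cleaner route is to transport the Baire property from $\G(A)$ or from $\Pm(A)$ through the continuous surjection $\psi_A^{\Sc}$, using that continuous open images and preimages interact well with category; the quasi-complete regularity assumption is precisely what should let the fibers of $\psi_A^{\Sc}$ be controlled.

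Once $\Sc = \MP(A)$ is shown to be a sufficiently large Baire subspace of $\mathrm{S-Glimm}(A)$ consisting of modular ideals and satisfying the openness condition, Theorem \ref{main} applies verbatim and yields a non-zero central element, completing the proof.
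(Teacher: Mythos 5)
Your overall route is the paper's own: take $\Sc := \MP(A)$, observe that it consists of modular semi-Glimm ideals and is sufficiently large (every primitive ideal contains a minimal primal ideal), verify the openness hypothesis of Theorem \ref{main} by using quasi-complete regularity of $\Pm(A)$ to produce a non-zero continuous function that factors through $\phi_A$, and then invoke Theorem \ref{main}. For the openness step, the paper makes your vague ``pull $\mathcal{W}$ back to an open subset of $\Pm(A)$'' precise via \cite[Corollary 4.3(a)]{A}: the topology of $\MP(A)$ is inherited from $\Id(A)$, not from any hull-kernel structure, so the pull-back is not automatic; instead, every non-void open subset of $\MP(A)$ contains a non-void set of the form $\{I\in \MP(A) \mid I_j\nsubseteq I,\ 1\leq j\leq n\}$, which in turn contains $\{I\in \MP(A) \mid J\nsubseteq I\}$ with $J = \cap_{j=1}^n I_j\neq \{0\}$; quasi-complete regularity is applied to the open set $\Pm(J)$, the resulting $f$ is written as $g\circ \phi_A$, and one checks that the preimage of $\{g>0\}$ under $\varphi_A := \psi_A|\MP(A)$ lies in $\{I \mid J\nsubseteq I\}$ by passing through a primitive ideal $P\supseteq I\supseteq \varphi_A(I)$. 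Your sketch of this step is correct in spirit and completable (and ``saturated'' is not needed: every continuous function on $\Pm(A)$ is constant on Glimm classes by definition of the complete regularization).

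The genuine gap is the Baire property of $\MP(A)$, which you explicitly leave open. The paper closes it with a single citation: by \cite[Proposition 4.9]{A}, $\MP(A)$ is a Baire space whenever $A$ has a countable approximate identity. This is a nontrivial result of Archbold, and none of your suggested substitutes works as stated. Baireness does not pass from the codomain to the domain of a continuous surjection, even an open one: project $\mathbb{Q}\times\Real$ onto its second factor; the image $\Real$ is Baire while the domain is meager in itself. Moreover $\G(A)$ is not known to be Baire in this setting, and there is no natural continuous surjection from $\Pm(A)$ onto $\MP(A)$ to transport category from the primitive ideal space; the paper's first example (where $\phi_A$ fails to be open) shows that the fibers of these maps cannot be controlled the way you hope. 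Nor is there any evident reason for $\MP(A)$ to be a dense $G_\delta$ in the compact space $\Id(A)$ or to be locally compact. So your proposal is incomplete at exactly the one point where a specific external theorem is required; once \cite[Proposition 4.9]{A} is invoked, the rest of your argument, fleshed out as above, coincides with the paper's proof.
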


\begin{proof}

    Let $I$ be an ideal of $A$. An ideal of $A/I$ has the form $J/I$ with $J$ an ideal of $A$. It is immediately seen that if $I/J$ is a minimal
    primal ideal of $A/I$ then $J$ is a primal ideal of $A$. Thus $A/J$ has a unit and so does $(A/I)/A/J)$ which is isomorphic to $A/J$.
    Clearly $\Pm(I)$ as an open subset of $\Pm(A)$ is quasi-completely regular.

    Every primitive ideal of $A$ contains a minimal primal ideal so $\MP(A)$ is sufficiently large. It is a Baire space by \cite[Proposition
    4.9]{A}. The restriction $\varphi_A$ of $\psi_A$ to $\MP(A)$ maps this space onto $\G(A)$, again since every primitive ideal contains a minimal
    primal ideal. We are going to show now that every non-void open subset of $\MP(A)$ contains the preimage by $\varphi_A$ of an open subset
    of $\G(A)$. So let
    $\mathcal{U}$ be a non-void open subset of $\MP(A)$. By \cite[Corollary 4.3(a)]{A}, $\mathcal{U}$ is the union of sets of the form $\mathcal{V} :=
    \{I\in \MP(A) \mid I_j\nsubseteq I, \; 1\leq j\leq n\}$ where $\{I_j\}_{j=1}^n$ is a set of ideals of $A$. So let $\mathcal{V}\neq \mset$ be such
    a set contained in $\mathcal{U}$. Since $\mathcal{V}$ contains at least one minimal primal ideal we must have $J := \cap_{j=1}^n I_j\neq
    \mset$ and clearly $\{I\in \MP(A) \mid J\nsubseteq I\}\subseteq \mathcal{V}$. $\Pm(A)$ is quasi-completely regular so there exists a non-zero
    continuous function $f : \Pm(A)\to \mathbb{R}$ that vanishes off $\Pm(J)$. Let $g : \G(A)\to \mathbb{R}$ be such that $f = g\circ \phi_A$.
    Then $\{G\in \G(A) \mid g(G) > 0\}$ is open and its preimage by $\varphi_A$ is contained in $\mathcal{V}$. Indeed, suppose $I\in \MP(A)$ and
    $g(\varphi_A(I)) > 0$. With $P$ a primitive ideal that contains $I$ we have $P\supseteq I\supseteq \varphi_A(I)$ hence
    \[
     f(P) = g(\phi_A(P)) = g(\phi_A(I)) > 0.
    \]
    Thus $P\in \Pm(J)$ so $I$ cannot contain $J$ which means $I\in \mathcal{V}$. Theorem \ref{main} implies that $A$ has a non-zero center.

\end{proof}

The $C^*$-algebra obtained by adjoining a unit to the ideal of compact operators on an infinite-dimensional Hilbert space is an example that
satisfies the conditions of Proposition \ref{open} but not those of Proposition \ref{qcr}. In Section \ref{second} we shall give an example of a
$C^*$-algebra in the situation described by Proposition \ref{qcr} for which the complete regularization map is not open.

A $C^*$-algebra $A$ was called in \cite[D\'{e}finition 4]{De} generalized quasi-central if for every ideal $I$ of $A$, $I\neq A$, the center of
$A/I$ is non-zero.

\begin{cor} \label{gqc}

Let $A$ be a $C^*$-algebra that has a countable approximate identity. Suppose that every minimal primal ideal of $A$ is modular and every closed
subset of $\Pm(A)$ is a quasi-completely regular space with its relative topology. Then $A$ is generalized quasi-central

\end{cor}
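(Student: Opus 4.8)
The plan is to fix an arbitrary proper ideal $I$ of $A$ and apply Proposition \ref{qcr} to the quotient $A/I$; establishing that the center of $A/I$ is non-zero for every such $I$ is precisely the assertion that $A$ is generalized quasi-central. Thus the whole task reduces to verifying that $A/I$ inherits the three hypotheses of Proposition \ref{qcr}: a countable approximate identity, modularity of all its minimal primal ideals, and quasi-complete regularity of its primitive ideal space.

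First I would dispose of the easy conditions. If $\{u_n\}$ is a countable approximate identity for $A$, then $\{\theta_I(u_n)\}$ is one for $A/I$, so $A/I$ is again $\sigma$-unital. Next, $\Pm(A/I)$ is canonically homeomorphic to the hull $\{P\in \Pm(A) \mid P\supseteq I\}$, which is a closed subset of $\Pm(A)$. This is exactly the point at which the strengthened hypothesis is needed: since \emph{every} closed subset of $\Pm(A)$ is quasi-completely regular in its relative topology, $\Pm(A/I)$ is quasi-completely regular. (Note that Proposition \ref{qcr} only asks for quasi-complete regularity of the full primitive ideal space of the algebra it is applied to, so this suffices.)

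The one condition that requires genuine argument is that every minimal primal ideal of $A/I$ is modular, and here I would reuse the observation already recorded inside the proof of Proposition \ref{qcr}. Every ideal of $A/I$ has the form $J/I$ for a unique ideal $J\supseteq I$ of $A$, and a short computation with products of ideals in $A/I$ shows that if $J/I$ is primal in $A/I$ then $J$ is primal in $A$. By Zorn's lemma $J$ then contains a minimal primal ideal $M$ of $A$, which is modular by hypothesis, so $A/M$ has a unit; consequently the further quotient $A/J\cong (A/I)/(J/I)$ is unital as well. Hence $J/I$ is modular in $A/I$, as required.

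With all three hypotheses verified, Proposition \ref{qcr} yields that $A/I$ has a non-zero center, and since $I$ was an arbitrary proper ideal, $A$ is generalized quasi-central. The main obstacle, such as it is, lies in the primal-ideal bookkeeping of the third paragraph---correctly matching primal ideals of $A/I$ with primal ideals of $A$ and threading modularity through the minimal primal ideal such a $J$ must contain---rather than in any hard analysis; the other two conditions transfer almost formally.
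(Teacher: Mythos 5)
Your proposal is correct and follows essentially the same route as the paper: reduce to Proposition \ref{qcr} applied to $A/I$, identify $\Pm(A/I)$ with a closed subset of $\Pm(A)$ to get quasi-complete regularity, and lift minimal primal ideals $J/I$ of $A/I$ to primal ideals $J$ of $A$ to obtain modularity. In fact you spell out the one step the paper leaves implicit---that $A/J$ is unital because the primal ideal $J$ contains a modular minimal primal ideal of $A$---so nothing is missing.
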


\begin{proof}

 Let $I$ be an ideal of $A$. An ideal of $A/I$ has the form $J/I$ with $J$ an ideal of $A$. It is immediately seen that if $J/I$ is a minimal
    primal ideal of $A/I$ then $J$ is a primal ideal of $A$. Thus $A/J$ has a unit and so does $(A/I)/(J/I)$ which is isomorphic to $A/J$.
    $\Pm(A/I)$ as a closed subset of $\Pm(A)$ is quasi-completely regular. Thus the conclusion follows from Proposition \ref{qcr}.

\end{proof}

Obviously every quasi-completely regular space has the property that every non-empty open subset contains a closed subset with non-empty
interior. In certain topological spaces this easily verifiable property implies that the space is quasi-completely regular.

\begin{lem} \label{quasi}

   Let $X$ be a locally compact space that has an open dense Hausdorff subset. If every non-void subset of $X$ contains a closed subset with
   non-void interior then $X$ is quasi-completely regular.

\end{lem}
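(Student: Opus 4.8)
The plan is to verify the definition of quasi-complete regularity directly: given an arbitrary non-void open subset $U\subseteq X$, I will produce a non-zero continuous function $f:X\to\mathbb{R}$ that vanishes on $X\setminus U$. The guiding idea is that the open dense Hausdorff subset $D$ carries all the usable local structure, so the entire construction should be pushed inside $D$, and the hypothesis that every non-void open set contains a closed subset with non-void interior is exactly what permits this.

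The first step, which I regard as the decisive one, is to apply the hypothesis not to $U$ itself but to $U\cap D$. Since $D$ is dense and open, $U\cap D$ is a non-void open subset of $X$, so the hypothesis yields a set $C$ that is closed in $X$, satisfies $C\subseteq U\cap D$, and has non-void interior $W:=\mathrm{int}(C)$. The gain is that now $C\subseteq D$, so $C$ inherits the Hausdorff topology of $D$; being a closed subset of the locally compact space $X$, it is also locally compact (a closed subset of a compact neighborhood is compact). Thus $C$ is a locally compact Hausdorff space, and $W$ is a non-void subset of $X$ that is open in $X$ and a fortiori open in $C$.

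Next I would build a bump function inside $C$ and extend it by zero. Fixing $p\in W$ and using local compactness and regularity of the locally compact Hausdorff space $C$, I choose an open (in $C$) set $V$ with $p\in V$, $\overline{V}$ compact in $C$, and $\overline{V}\subseteq W$; Urysohn's lemma for locally compact Hausdorff spaces then supplies a continuous $h:C\to[0,1]$ with $h(p)=1$ and $K:=\mathrm{supp}(h)\subseteq\overline{V}\subseteq W$. Define $f=h$ on $C$ and $f=0$ on $X\setminus C$. The continuity of $f$ is where the reductions pay off. Because $K\subseteq C\subseteq D$ and $C$ is closed in $X$, the compact set $K$ is closed in $X$, so $X\setminus K$ is open and $f\equiv0$ there; and since $K\subseteq W=\mathrm{int}(C)$ is contained in a set open in $X$, on $W$ the function $f$ coincides with the continuous $h$. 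As $W$ and $X\setminus K$ are two open sets covering $X$ on each of which $f$ is continuous, $f$ is continuous on $X$. Finally $f(p)=1\neq0$ and $f$ vanishes off $K\subseteq U$, so $f\equiv0$ on $X\setminus U$.

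The step I expect to be the genuine obstacle is the very first one, namely realizing that the hypothesis must be invoked on $U\cap D$ rather than on $U$, so that the resulting closed set lands inside $D$. This single choice does two jobs at once: it makes $C$ locally compact \emph{Hausdorff}, so that bump functions exist, and it forces the support $K$ to be closed in the ambient $X$, so that extension by zero remains continuous across the points of $X\setminus D$. Without confining $C$ to $D$, a function transported from the Hausdorff part could fail to extend continuously, precisely because a compact subset of $D$ need not be closed in the possibly non-Hausdorff space $X$; the line with a doubled point is the model case one should keep in mind to see why this subtlety is real.
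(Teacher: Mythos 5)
Your proof is correct and takes essentially the same route as the paper's: both arguments hinge on applying the hypothesis to $U\cap O$ (where $O$ is the dense open Hausdorff subset) so that the resulting closed set lies in the Hausdorff part, then building a bump function there and extending by zero, with continuity checked by the same two-open-set gluing. The only cosmetic difference is that you run Urysohn's lemma on the closed set $C$ itself (locally compact since it is closed in $X$) and use $X\setminus K$ as the zero-neighbourhood, whereas the paper invokes complete regularity of the open subspace $U\cap O$ and uses $X\setminus F$ for the same purpose.
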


\begin{proof}

   Let $O$ be an open dense Hausdorff subset of $X$. If $U$ is any non-void open subset of $X$ then $U\cap O$ is a non void open set which is
   locally compact Hausdorff in its relative topology. Let $F$ be a closed subset of $U\cap O$ whose interior $V$ is non-void. $U\cap O$ is a
   completely regular space so  there is a non-zero real continuous function $f$ on $U\cap O$ which vanishes on $(U\cap O)\setminus V$. Now
   extend $f$ to all of $X$ by defining $f(x) := 0$ if $x\in X\setminus (U\cap O)$. Every point in $X\setminus (U\cap O)$ has a neighbourhood on
   which $f$ is identically zero, namely $X\setminus F$. Every point in $U\cap O$ has a neighbourhood on which $f$ is continuous, namely $U\cap
   O$ itself. Thus $f$ is continuous on $X$.

\end{proof}

\begin{prop} \label{post}

   Let $A$ be a postliminal $C^*$-algebra with a countable approximate identity. Suppose that every minimal primal ideal of $A$ is modular and
   $\Pm(A)$ has the property that every non-void open subset of $\Pm(A)$ contains a closed subset with non-empty interior. Then $A$ has a non-zero center.

\end{prop}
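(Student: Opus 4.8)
The plan is to reduce the statement to Proposition \ref{qcr}. Since $A$ has a countable approximate identity and every minimal primal ideal of $A$ is modular, it will suffice to show that $\Pm(A)$ is quasi-completely regular, for then Proposition \ref{qcr} yields the conclusion immediately. In turn I would obtain quasi-complete regularity from Lemma \ref{quasi}. The space $\Pm(A)$ is locally compact in the Jacobson topology for any $C^*$-algebra, and the standing hypothesis that every non-void open subset of $\Pm(A)$ contains a closed subset with non-empty interior supplies the second requirement of Lemma \ref{quasi} (which is used only for open sets in its proof). Hence everything comes down to producing an open dense Hausdorff subset of $\Pm(A)$. This is the only place where postliminality is used, and I expect it to be the crux.

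To build such a subset I would use Zorn's lemma on the family of all open Hausdorff subsets of $\Pm(A)$, ordered by inclusion. The union of a chain of such sets is again open, and it is Hausdorff: any two of its points already lie together in a single member of the chain, which separates them, and that separation occurs inside an open set and hence in $\Pm(A)$. Thus a maximal open Hausdorff subset $O$ of $\Pm(A)$ exists. The heart of the matter is to show that $O$ is dense.

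Suppose it were not. Then $W := \Pm(A)\setminus \overline{O}$ is a non-void open set, say $W = \Pm(K)$ for a non-zero ideal $K$ of $A$. Being an ideal of a postliminal algebra, $K$ is itself postliminal, so by the structure theory of postliminal $C^*$-algebras $K$ possesses a non-zero ideal $L$ with Hausdorff spectrum, for instance a non-zero continuous-trace ideal. Since $\Pm(L)$ is open in $\Pm(K)$ and $\Pm(K)$ is open in $\Pm(A)$, the set $\Pm(L)$ is a non-void open Hausdorff subset of $\Pm(A)$ contained in $W$, and therefore disjoint from $O$. Consequently $O\cup \Pm(L)$ is open and strictly larger than $O$; it is moreover Hausdorff, since two of its points lying in the same piece are separated there, while a point of $O$ and a point of $\Pm(L)$ are separated by the disjoint open sets $O$ and $\Pm(L)$ themselves. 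This contradicts the maximality of $O$, so $O$ is dense.

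With $O$ an open dense Hausdorff subset of the locally compact space $\Pm(A)$, all of whose non-void open subsets contain a closed set with non-void interior, Lemma \ref{quasi} shows that $\Pm(A)$ is quasi-completely regular, and Proposition \ref{qcr} then gives that $A$ has a non-zero center. The main obstacle is precisely the existence of a non-zero ideal with Hausdorff spectrum inside an arbitrary non-zero postliminal algebra: this is where the postliminal hypothesis is genuinely needed, and it is the only step relying on external structure theory rather than on the results already established in this section.
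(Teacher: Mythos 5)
Correct, and essentially the paper's own route: you reduce to Proposition \ref{qcr} through Lemma \ref{quasi} by producing a dense open Hausdorff subset of $\Pm(A)$, exactly as the paper does. The only difference is that the paper obtains that subset by citing \cite[Theorem 4.5.5]{D}, while your Zorn's lemma argument---a maximal open Hausdorff subset, combined with the fact that every non-zero postliminal $C^*$-algebra contains a non-zero continuous-trace (hence Hausdorff-spectrum) ideal---amounts to a proof of that cited theorem, so the step you single out as the crux is precisely the content of the citation rather than a genuinely different argument.
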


\begin{proof}

   $\Pm(A)$ contains an open dense Hausdorff subset by \cite[Theorem 4.5.5]{D}. Lemma \ref{quasi} and Proposition \ref{qcr} yield the
   conclusion.

\end{proof}

\section{Examples} \label{second}

The first example is a $C^*$-algebra $A$ that satisfies the conditions of Proposition \ref{post} for which $\phi_A$ is not open.

\begin{ex}

We adapt a construction from \cite[Example III.9.2]{DH}. We denote by $\KH$ the ideal of all the compact operators on a separable Hilbert space
$H$ and by $B$ the $C^*$-algebra generated by $\KH$ and the identity operator of $H$. $A$ is the $C^*$-algebra of all the continuous functions
$f : [-1,1]\to B$ such that $f(t)$ is diagonal with respect to a fixed orthonormal basis $\{e_n\}_{n=1}^{\infty}$ of $H$ whenever $0\leq t\leq
1$. Thus $f(t)$, $0\leq t\leq 1$, can be represented in the chosen basis by $\mathrm{diag}(f_1(t),f_2(t), \dots)$ where $f_n$ are scalar valued
continuous functions. Put $f_{\infty}(t) := \lim_{n\to \infty}f_n(t)$, $0\leq t\leq 1$; $f_{\infty}$ is a scalar valued continuous function too.

Clearly $A$ is a separable postliminal algebra. Its primitive ideals are: $P(t) := \{f\in A \mid f(t) = 0\}$, $Q(t) := \{f\in A \mid f(t)\in
\KH\}$ for $-1\leq t < 0$ and $R(t,n) := \{f\in A \mid f_n(t) = 0\}$ for $0\leq t\leq 1$, $1\leq n\leq \infty$. We are going now to list a
neighbourhood basis for each kind of kind of primitive ideal:
\begin{itemize}
   \item for $P(t_0)$, $-1\leq t_0 < 0$, the family of all the sets $\{P(t) \mid t\in (t_0-\eta,t_0+\eta)\cap [-1,0)\}$, with $\eta > 0$,
   \item for $Q(t_0)$, $-1\leq t_0 < 0$, the family of all the sets $\{P(t) \mid t\in (t_0-\eta,t_0+\eta)\cap [-1,0)\}\cup \{Q(t) \mid t\in
      (t_0-\eta,t_0+\eta)\cap [-1,0)\}$, with  $\eta > 0$,
   \item for $R(0,n_0)$, $1\leq n_0 < \infty$, the family of all the sets $\{P(t) \mid -\eta < t < 0\}\cup \{R(t,n_0) \mid 0\leq t < \eta\}$, with $0 <
      \eta <1$,
   \item for $R(0,\infty)$, the family of all the sets $\{P(t) \mid -\eta < t < 0\}\cup \{Q(t) \mid -\eta < t < 0\}\cup \{R(t,n) \mid 0\leq t < \eta, \;
      n_0 < n\leq \infty\}$, with $0 < \eta < 1$ and $1\leq n_0 < \infty$,
   \item for $R(t_0,n_0)$, $0 < t_0\leq 1$, $!\leq n_0 < \infty$, the family of all the sets $\{R(t,n_0) \mid t\in (t_0-\eta,t_0+\eta)\cap
   (0,1]\}$, with $\eta > 0$,
   \item for $R(t_0,\infty)$, $0 < t_0\leq 1$, the family of all the sets $\{R(t,n) \mid n > n_0, \; t\in (t_0-\eta,t_0+\eta)\cap (0,1]\}$, with
      $n_0\geq 1$ and $\eta > 0$.
\end{itemize}
It is easily seen that every non-empty open subset of $\Pm(A)$ contains a closed subset with non-empty interior.

The minimal primal ideals of $A$ are $P(t)$ for $1\leq t < 0$, $G := \cap \{R(0,n) \mid 1\leq n\leq\infty \}$, and $R(t,n)$ for $0 < t\leq 1$,
$1\leq n\leq \infty$ and every one is modular. These are also the Glimm ideals of $A$. Now, an open neighbourhood of $G$ in $\G(A)$ must contain
a set of the form $\{P(t) \mid -\eta < t <0\}\cup \{G\}\cup \{R(t,n) \mid 0 < t < \eta, \; 1\leq n\leq \infty\}$ for some $\eta\in (0,1)$ and
one easily sees that $\phi_A$ is not open. On the other hand, all the hypotheses of Proposition \ref{post} are fulfilled. We remark also that
$\Pm(A)$ is not quasi-separated so Proposition 14 of \cite{De} cannot be applied to $A$.

\end{ex}

As promised in the introduction we present now a postliminal (separable) AF algebra whose all primitive ideals are modular but with center
reduced to $\{0\}$. As a matter of fact all the minimal primal ideals of this algebra are modular so the hypothesis made on the primitive ideal
space in Proposition \ref{post} cannot be eliminated.

\begin{ex}

A Bratteli diagram of this algebra, $A$, appears in the figure that follows. In it the first vertex of the connected sequence $a_1$ should be
thought at the level 1 while the first vertex of the connected sequence $a_n$ should be imagined at the level $1 + 2(n - 1)$.

\setlength{\unitlength}{1mm}
\begin{picture}(200,100)
\put(0,10){$a_1$} \put(16,10){$a_n$} \put(24,10){$b_n$} \put(36,10){$c_n$} \put(44,10){$d_n$} \put(56,10){$e_n$} \put(64,10){$f_n$}
\put(76,10){$g_n$} \put(84,10){$h_n$} \put(96,10){$a_{n+1}$}

 \multiput(0,20)(0,8){7}{\line(0,1){8}} \multiput(0,20)(0,8){8}{\ci} \multiput(-2,20)(0,8){8}{$1$} \multiput(0,18)(0,-1){3}{\cd}
 \multiput(1,19)(0,8){8}{\cd} \multiput(1.5,18.5)(0,8){8}{\cd} \multiput(2,18)(0,8){8}{\cd} \multiput(1,19.7)(0,8){6}{\cd}
 \multiput(1.5,19.5)(0,8){6}{\cd} \multiput(2,19.3)(0,8){6}{\cd}

 \multiput(16,20)(0,8){6}{\line(0,1){8}} \multiput(16,20)(0,8){7}{\ci} \multiput(14,20)(0,8){7}{$1$} \multiput(15,18.8)(0,8){7}{\cd}
 \multiput(14,18.5)(0,8){7}{\cd} \multiput(13,18.2)(0,8){7}{\cd} \multiput(16,18)(0,-1){3}{\cd} \multiput(16,28)(0,8){6}{\line(1,-1){8}}
 \multiput(17,19)(0.5,-0.5){3}{\cd} \multiput(16,28)(0,8){4}{\line(5,-2){20}} \multiput(17,19.6)(1,-0.4){3}{\cd}

 \multiput(24,20)(0,8){5}{\line(0,1){8}} \multiput(24,20)(0,8){6}{\ci} \multiput(24,18)(0,-1){3}{\cd} \put(26,20){$6$} \put(26,28){$5$}
 \put(26,36){$4$} \put(26,44){$3$} \put(26,52){$2$} \put(26,60){$1$}

 \multiput(36,20)(0,8){3}{\line(0,1){8}} \multiput(36,20)(0,8){4}{\ci} \multiput(36,18)(0,-1){3}{\cd} \put(32,20){$12$} \put(34,28){$9$}
 \put(34,36){$6$} \put(34,44){$3$} \multiput(36,20)(0,8){4}{\line(1,1){8}} \multiput(36,20)(0,8){4}{\line(5,4){20}}

 \multiput(44,20)(0,8){4}{\line(0,1){8}} \multiput(44,20)(0,8){5}{\ci} \multiput(46,20)(0,8){5}{$1$} \multiput(44,18)(0,-1){3}{\cd}
 \multiput(43,19)(-0.5,-0.5){3}{\cd}

 \multiput(56,20)(0,8){5}{\line(0,1){8}} \multiput(56,20)(0,8){6}{\ci} \multiput(54,20)(0,8){6}{$1$} \multiput(56,28)(0,8){5}{\line(1,-1){8}}
 \multiput(56,28)(0,8){3}{\line(5,-2){20}} \multiput(55,18.4)(0,8){2}{\cd} \multiput(54,17.8)(0,8){2}{\cd} \multiput(53,17.2)(0,8){2}{\cd}
 \multiput(56,18)(0,-1){3}{\cd} \multiput(57,19)(0.5,-0.5){3}{\cd} \multiput(57,19.6)(1,-0.4){3}{\cd}

 \multiput(64,20)(0,8){4}{\line(0,1){8}} \multiput(64,20)(0,8){5}{\ci} \put(66,20){$5$} \put(66,28){$4$} \put(66,36){$3$} \put(66,44){$2$}
 \put(66,52){$1$} \multiput(64,18)(0,-1){3}{\cd}

 \multiput(76,20)(0,8){2}{\line(0,1){8}} \multiput(76,20)(0,8){3}{\ci} \put(74,20){$9$} \put(74,28){$6$} \put(74,36){$3$}
 \multiput(76,20)(0,8){3}{\line(1,1){8}} \multiput(76,20)(0,8){3}{\line(5,4){20}} \multiput(76,18)(0,-1){3}{\cd}

 \multiput(84,20)(0,8){3}{\line(0,1){8}} \multiput(84,20)(0,8){4}{\ci} \multiput(86,20)(0,8){4}{$1$} \multiput(84,18)(0,-1){3}{\cd}
 \multiput(83,19)(-0.5,-0.5){3}{\cd}

 \multiput(96,20)(0,8){4}{\line(0,1){8}} \multiput(96,20)(0,8){5}{\ci} \multiput(94,20)(0,8){5}{$1$} \multiput(95,19.2)(0,8){2}{\cd}
 \multiput(94,18.4)(0,8){2}{\cd} \multiput(93,17.6)(0,8){2}{\cd} \multiput(96,18)(0,-1){3}{\cd} \multiput(97,19)(0,8){5}{\cd}
 \multiput(98,18)(0,8){5}{\cd} \multiput(99,17)(0,8){5}{\cd} \multiput(97,19.4)(0,8){5}{\cd} \multiput(98,18.8)(0,8){5}{\cd}
 \multiput(99,18.2)(0,8){5}{\cd}

\end{picture}

It can be immediately checked that the diagram above has the property that for every connected sequence $\set{x_m}_{m=1}^{\infty}$ in it there
is a natural number $k$ such that for $m\geq k$ the vertex $x_{m+1}$ is a descendant of $x_m$ with multiplicity one. Hence, by \cite[Theorem
3.13]{LT}, $A$ is a postliminal algebra.

By direct examination one finds that the primitive quotients of $A$ have one of the following diagrams: $\{a_n\}$, $\{a_n,b_n\}$,
$\{a_n,c_n,d_n,e_n\}$, $\{d_n\}$, $\{e_n\}$, $\{e_n,f_n\}$, $\{e_n,g_n,h_n,a_{n+1}\}$, $\{h_n\}$, $n = 1,2, \ldots$ . We shall denote the
primitive ideals determined by the complementary diagrams by $P_n$, $Q_n$, $R_n$, $S_n$, $T_n$, $U_n$, $V_n$, $W_n$, $n = 1,2, \ldots$,
respectively. It is obvious that the diagrams of the primitive quotients are diagrams of unital AF algebras hence all the primitive ideals of
$A$ are modular. Moreover all the quotients of $A$ by its minimal primal ideals are modular. Indeed, by \cite[Theorem 2.1]{Be} an ideal $I$ of
an AF algebra is primal if and only if its associated diagram $D_I$ has the property that every finite set of vertices not in $D_I$ has a common
descendant in the diagram of the algebra. It is then easily seen that all the diagrams of the quotients of $A$ by the minimal primal ideals are
$\{a_n,b_n\}$, $\{a_n,c_n,d_n,e_n\}$, $\{e_n,f_n\}$, $\{e_n,g_n,h_n,a_{n+1}\}$, $n = 1,2, \ldots$ and all these quotients have units.

Now we are going to show that there are no nonzero elements in the center of $A$. To this end we prove that every real valued continuous
function on $\Pm(A)$ is constant. First remark that by the definition of the hull-kernel topology of the primitive ideal space we have:

\begin{multline*}
    \overline{\{P_n\}} = \{P_n\}, \overline{\{Q_n\}} = \{Q_n,P_n\}, \overline{\{R_n\}} = \{R_n,P_n,S_n,T_n\}, \overline{\{S_n\}} = \{S_n\},\\
    \overline{\{T_n\}} = \{T_n\}, \overline{\{U_n\}} = \{U_n,T_n\}, \overline{\{V_n\}} = \{V_n,T_n,W_n,P_{n+1}\}, \overline{\{W_n\}} = \{W_n\}.
\end{multline*}
Let $f : \Pm(A)\to \mathbb{R}$ be a continuous function and suppose $f(P_1) = \cdots = f((P_n) = \alpha$. Then by the above equalities we must
have $\alpha = f(P_n) = f(Q_n) = f(R_n) = f(S_n) = f(T_n) = f(U_n) = f(V_n) = f(W_n) = f(P_{n+1})$ and we conclude that $f$ is a constant
function. We gather from the Dauns-Hofmann theorem that the center of the multiplier algebra of $A$ consists only of the scalar multiples of the
unit. On the other hand, $A$ has no unit by \cite[Proposition 2.13]{LT} and we are done.

\end{ex}

\bibliographystyle{amsplain}
\bibliography{}

\end{document}